\def \RR{{\mathbb R}}
\def \ZZ{{\mathbb Z}}
\numberwithin{equation}{section}
\numberwithin{figure}{section}
\theoremstyle{plain}
\newtheorem{thm}{\protect\theoremname}
  \theoremstyle{plain}
  \newtheorem{prop}[thm]{\protect\propositionname}
  \theoremstyle{plain}
  \newtheorem{lem}[thm]{\protect\lemmaname}
  \newtheorem{cor}[thm]{\protect\corollaryname}
\def \vski{\vspace{12pt}}
\providecommand{\corollaryname}{Corollary}
  \providecommand{\lemmaname}{Lemma}
  \providecommand{\propositionname}{Proposition}
\providecommand{\theoremname}{Theorem}
\begin{document}

\title{\textbf{Remarks on the speeds of a class of random walks on the integers}}

\author{Maher Boudabra, Greg Markowsky\\
Monash University}
\maketitle
\begin{abstract}
In recent years, there has been an interest in deriving certain important probabilistic results as consequences of deterministic ones; see for instance \cite{beig} and \cite{acc}. In this work, we continue on this path by deducing a well known equivalence between the speed of random walks on the integers and the growth of the size of their ranges. This result is an immediate consequence of the Kesten-Spitzer-Whitman theorem, and by appearances is probabilistic in nature, but we will show that it follows easily from an elementary deterministic result. We also investigate the common property of recurrent random walks of having speed zero, and show by example that this property need not be shared by deterministic sequences. However, if we consider the inter-arrival times (times at which the sequence is equal to 0) then we find a sufficient deterministic condition for a sequence to have zero speed, and show that this can be used to derive several probabilistic results.

\end{abstract}

\vski
\vski

In many probabilistic settings, the {\it speed} of a random walk $X_n$ on the integers $\ZZ$ is defined to be $\lim_{n\to \infty}\frac{|X_n|}{n}$, whenever the limit exists. Over time, a number of interesting equivalences have been derived relating the speed with other quantitative aspects of the walk in question. The standard proofs in the literature are probabilistic, however several of these results are actually deterministic ones in disguise. The purpose of this note is to describe several examples of this phenomenon.

\vski

\section{Equivalence of speeds of the range and of the random walk}

Let us refer to a stochastic process of the form
\[
X_{0}=0,\,\,X_{n}=\sum_{k=1}^{n}Z_{k}
\]
 where the $Z_{i}$'s are i.i.d random variables with $\mathbb{P}(Z_{i}=1)=1-\mathbb{P}(Z_{i}=-1)=p$,
as \emph{simple random walk} on $\mathbb{Z}$. We then have the following
theorem, which is a special case of the Kesten-Spitzer-Whitman Theorem (see \cite[Ch. 1]{spitz})

\begin{thm} \label{oldbusted}
\[
\underset{{\scriptscriptstyle n\rightarrow+\infty}}{\lim}\frac{R_{n}}{n}=\mathbb{P}\left\{ \text{\ensuremath{X_{n}\neq0} }\forall n>0\right\} =\mathbb{P}\{\text{\ensuremath{X_{n}} never returns to }0\}
\]
where
\[
R_{n}=\text{card}\{X_{0},...,X_{n}\}
\]
\end{thm}

In other words, the number of distinct values taken by $X_{n}$ up to time $n$ can be used to calculate the probability of never returning to zero. It can also be shown that (see \cite{norris})

\[
\mathbb{P}\{\text{\ensuremath{X_{n}} never returns to }0\}=|2p-1|={\displaystyle \lim_{{\scriptscriptstyle n\rightarrow+\infty}}}\frac{|X_{n}|}{n},
\]

with the final equality being simply the Law of Large Numbers, and therefore

\begin{equation}
\underset{{\scriptscriptstyle n\rightarrow+\infty}}{\lim}\frac{R_{n}}{n}=\underset{{\scriptscriptstyle n\rightarrow+\infty}}{\lim}\frac{|X_{n}|}{n}\label{eq:limits}
\end{equation}

We consider the left side of this equation to be the speed of the range process $R_n$, and as discussed before the right is the speed of the walk $X_n$. The standard proofs of the preceding facts are all probabilistic; however, despite appearances, the equality (\ref{eq:limits}) is deterministic in nature, as we now show.

Let $x_{n}$ be an (deterministic) integer valued sequence satisfying
\begin{equation}
\begin{vmatrix}x_{n+1}-x_{n}\end{vmatrix}\leq1.\label{increment}
\end{equation}
As before set
\[
r_{n}=\text{card}\left\{ x_{0},x_{1},...,x_{n}\right\}
\]
We give the following result.
\begin{prop}
\label{prop:|ell|}If
\[
\underset{{\scriptscriptstyle n\rightarrow+\infty}}{\lim}\frac{x_{n}}{n}=\ell
\]
 then
\[
\underset{{\scriptscriptstyle n\rightarrow+\infty}}{\lim}\frac{r_{n}}{n}=\begin{vmatrix}\ell\end{vmatrix}.
\]
\end{prop}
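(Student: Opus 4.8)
The plan is to eliminate the counting quantity $r_n$ in favour of a pair of running extrema, and then reduce the whole statement to an elementary fact about the limits of running maxima and minima. Throughout write
\[
M_n = \max_{0\le k \le n} x_k, \qquad m_n = \min_{0\le k \le n} x_k .
\]

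The first and crucial step is the observation that, because the increments satisfy (\ref{increment}) and the sequence takes integer values, the set of visited values is a block of \emph{consecutive} integers. Indeed, pick indices $i,j \le n$ with $x_i = m_n$ and $x_j = M_n$; travelling along the sequence from $x_i$ to $x_j$, each step changes the value by at most one, so every integer between $m_n$ and $M_n$ must be attained by some $x_k$ with $k$ lying between $i$ and $j$. This is just a discrete intermediate value theorem. Consequently $\left\{x_0,\dots,x_n\right\} = \left\{m_n, m_n+1,\dots,M_n\right\}$, and therefore
\[
r_n = M_n - m_n + 1 .
\]
This identity converts the counting problem into an extremal one, and it is the only place where hypothesis (\ref{increment}) is used.

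The second step is to establish the two limits
\[
\lim_{n\to\infty}\frac{M_n}{n} = \max(\ell,0), \qquad \lim_{n\to\infty}\frac{m_n}{n} = \min(\ell,0).
\]
Granting these, the identity above immediately gives $\lim_{n\to\infty} r_n/n = \max(\ell,0) - \min(\ell,0) = |\ell|$, which is the claim. To prove the statement for $M_n$, I would fix $\varepsilon > 0$ and use $x_n/n \to \ell$ to find $N$ with $x_k \le (\ell+\varepsilon)k$ for all $k \ge N$; splitting the maximum defining $M_n$ into the finitely many indices $k < N$, which contribute a bounded (hence $o(n)$) amount, and the indices $N \le k \le n$, which are bounded by $(\ell+\varepsilon)n$ once $\ell+\varepsilon > 0$, one obtains $\limsup_n M_n/n \le \max(\ell,0)$. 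The matching lower bound $\liminf_n M_n/n \ge \max(\ell,0)$ follows at once from the trivial inequalities $M_n \ge x_n$ and $M_n \ge x_0$. The statement for $m_n$ follows by applying the same argument to the sequence $-x_n$, whose limiting ratio is $-\ell$.

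The main obstacle is really the first step: everything hinges on recognising that a unit-step integer sequence sweeps out an entire interval, so that $r_n$ collapses to $M_n - m_n + 1$. After that, the remaining content is the routine but mildly delicate verification that the running maximum and minimum inherit the limiting ratio $\ell$, the only subtlety being the uniform control of the finitely many early terms, which is handled by boundedness of any finite initial segment. Note that the degenerate case $\ell = 0$, in which both extrema grow sublinearly and $r_n = o(n)$, is absorbed automatically by the $\max$/$\min$ formulation and requires no separate treatment.
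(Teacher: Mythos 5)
Your proof is correct, but it takes a different route from the paper's. The paper argues by two-sided $\varepsilon$-estimates on $r_n$ directly: assuming $x_0=0$ and $\ell>0$, it gets the lower bound $\lfloor n(\ell-\varepsilon)\rfloor\le r_n$ from the unit-increment hypothesis (the same discrete intermediate-value idea you use, but applied only along the stretch from $x_0=0$ to $x_n$), and the upper bound $r_n\le r_{N-1}+n(\ell+\varepsilon)$ by a counting argument: the terms $x_k$ for $N\le k\le n$ are positive integers bounded above by $n(\ell+\varepsilon)$, so there are at most $n(\ell+\varepsilon)$ distinct values among them. It then handles $\ell<0$ by symmetry and $\ell=0$ as a separate case. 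You instead prove the exact identity $r_n=M_n-m_n+1$ and reduce everything to the limits $M_n/n\to\max(\ell,0)$ and $m_n/n\to\min(\ell,0)$, which treats all signs of $\ell$ (including $\ell=0$) in one stroke and isolates precisely where hypothesis (\ref{increment}) enters. That is a genuine gain in cleanliness and unity. What the paper's looser two-sided formulation buys is robustness: when the increment bound is relaxed to $|x_{n+1}-x_n|\le m$ with $m>1$ (Proposition \ref{inequality lim range}), your identity fails --- e.g.\ $x_n=2n$ has $r_n=n+1$ while $M_n-m_n+1=2n+1$ --- whereas the paper's counting upper bound survives verbatim (it uses only integrality, not the increment condition), and its lower bound degrades gracefully into the maximal range inequality with the factor $1/m$; of your identity, only the weaker two-sided control $(M_n-m_n)/m+1\le r_n\le M_n-m_n+1$ remains in that setting.
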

\begin{proof}
Without loss of generality we assume $x_{0}=0$, and suppose first
that $\ell>0$ . Let $0<\varepsilon<\ell$, as
\[
\underset{{\scriptscriptstyle n\rightarrow+\infty}}{\lim}\frac{x_{n}}{n}=\ell
\]
 then there is $N>0$ such that
\begin{equation}
n\left(\ell-\varepsilon\right)\leq x_{n}\leq n\left(\ell+\varepsilon\right)\,\,\,\,\text{for \ensuremath{n>N} }\label{eq:inequality}
\end{equation}
Because of (\ref{increment}), we get
\[
\lfloor n\left(\ell-\varepsilon\right)\rfloor\leq r_{n}
\]
Furthermore
\[
\text{card}\{x_{N},...,x_{n}\}\leq n(\varepsilon+\ell)
\]
since all terms are positive and bounded above by $n(\varepsilon+\ell)$.
Therefore
\[
r_{n}\leq r_{N-1}+\text{card}\{x_{N},...,x_{n}\}\leq r_{N-1}+n(\varepsilon+\ell)
\]
Hence
\[
\ell-\varepsilon\leq\liminf_{+\infty}\frac{r_{n}}{n}\leq\limsup_{+\infty}\frac{r_{n}}{n}\leq\ell+\varepsilon
\]
The result now follows. The case $\ell<0$ is similar by considering
$-x_{n}$ instead. Assume now $\ell=0$, in this case we get
\[
|x_{n}|\leq r_{N-1}+2\varepsilon n
\]
instead of (\ref{eq:inequality}) for $n>N$, and the rest follows
as before.
\end{proof}

We now present a probabilistic application of this result. It is of interest to define random walks whose increments are no longer i.i.d, but are instead assumed to be ergodic (see \cite{shiry} for relevant definitions). Let $(\xi_{n})_{n}$ be an ergodic sequence of random variables with
values in $\{-1,0,1\}$, and let $X_{n}=\sum_{k=1}^{n}\xi_{k}$. As before let $R_{n}=\text{card}\{X_{0},...,X_{n}\}$. We then have the following proposition, which generalizes Theorem \ref{oldbusted}, and which is an immediate consequence of Proposition \ref{prop:|ell|}.

\begin{prop}
\[
\underset{{\scriptscriptstyle n\rightarrow+\infty}}{\lim}\frac{R_{n}}{n}=|E(\xi_{0})|
\]
\end{prop}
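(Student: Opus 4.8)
The plan is to apply Proposition \ref{prop:|ell|} pathwise, using the Birkhoff ergodic theorem in place of the Law of Large Numbers. First I would observe that, because each increment $\xi_k$ takes values in $\{-1,0,1\}$, every sample path satisfies $|X_{n+1}-X_n| = |\xi_{n+1}| \le 1$; that is, the deterministic hypothesis (\ref{increment}) holds surely, for every $\omega$ in the underlying probability space, and not merely almost surely.

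Next, since $(\xi_n)_n$ is ergodic and $\xi_0$ is bounded (hence integrable), the Birkhoff ergodic theorem gives
\[
\frac{X_n}{n} = \frac{1}{n}\sum_{k=1}^n \xi_k \longrightarrow E(\xi_0)
\]
almost surely. I would then fix an $\omega$ in the full-measure event on which this convergence holds, and set $x_n = X_n(\omega)$ and $\ell = E(\xi_0)$. Then $x_n$ is a deterministic integer sequence satisfying (\ref{increment}) with $\lim_{n\to\infty} x_n/n = \ell$, so Proposition \ref{prop:|ell|} applies and yields $\lim_{n\to\infty} r_n/n = |\ell|$, where $r_n = R_n(\omega)$. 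Since this holds for almost every $\omega$, we conclude that $R_n/n \to |E(\xi_0)|$ almost surely, which is the asserted identity.

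The point I would emphasize is that essentially all the combinatorial work has already been done: the only probabilistic ingredient is the ergodic theorem, which supplies the limiting speed $\ell = E(\xi_0)$, while the nontrivial step relating the growth of the range to the speed of the walk is exactly the content of the deterministic Proposition \ref{prop:|ell|}. I do not anticipate a genuine obstacle here; the only point requiring a moment's care is the logical structure of the pathwise argument, namely verifying that the increment bound holds surely so that Proposition \ref{prop:|ell|} may legitimately be invoked on the full-measure set where the ergodic averages converge. This also makes transparent why the result generalizes Theorem \ref{oldbusted}: the i.i.d.\ case is the special instance in which the ergodic theorem reduces to the strong law, and $E(\xi_0) = 2p-1$.
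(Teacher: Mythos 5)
Your proof is correct and is precisely the argument the paper intends: the paper states this proposition without proof as ``an immediate consequence of Proposition \ref{prop:|ell|}'', and your pathwise combination of the Birkhoff ergodic theorem (giving $X_n/n \to E(\xi_0)$ a.s.) with the deterministic proposition (applicable surely, since $|\xi_k|\le 1$) is exactly that deduction, spelled out carefully.
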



If we put the weaker condition $\begin{vmatrix}x_{n+1}-x_{n}\end{vmatrix}\leq m$
where $m$ is an integer greater than $1$, then we have the following
generalization
\begin{prop}
\label{inequality lim range}If ${\displaystyle \underset{{\scriptscriptstyle n\rightarrow+\infty}}{\lim}\frac{x_{n}}{n}=\ell}$
then
\begin{equation}
\frac{|\ell|}{m}\leq\liminf_{+\infty}\frac{r_{n}}{n}\leq\limsup_{+\infty}\frac{r_{n}}{n}\leq\min(1,|\ell|)\label{eq:inequality-1}
\end{equation}
\end{prop}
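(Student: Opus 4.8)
The plan is to prove the two upper bounds and the lower bound separately, recycling as much of the reasoning of Proposition \ref{prop:|ell|} as possible. As there, I would assume without loss of generality that $x_0 = 0$ and treat the case $\ell > 0$. The case $\ell < 0$ then follows by replacing $x_n$ with $-x_n$, which changes neither $r_n$ nor the increment bound $m$; and the case $\ell = 0$ makes both outer terms equal to $0$, so it reduces to showing $\limsup r_n/n = 0$, which is exactly the $\ell = 0$ argument of Proposition \ref{prop:|ell|} (that upper bound never used the size of the increments).

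For the upper bound, I would first note the trivial estimate $\limsup r_n/n \le 1$, since $\{x_0,\dots,x_n\}$ has at most $n+1$ elements and hence $r_n \le n+1$. For the bound $\limsup r_n/n \le |\ell| = \ell$, I would repeat the confinement argument of Proposition \ref{prop:|ell|}: fix $0 < \varepsilon < \ell$ and $N$ with $n(\ell-\varepsilon) \le x_n \le n(\ell+\varepsilon)$ for $n > N$, so that $x_N,\dots,x_n$ are positive integers not exceeding $n(\ell+\varepsilon)$, giving $r_n \le r_{N-1} + n(\ell+\varepsilon)$ and thus $\limsup r_n/n \le \ell + \varepsilon$. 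This step is insensitive to the size of the increments and so transfers verbatim. Combining the two estimates yields $\limsup r_n/n \le \min(1,|\ell|)$.

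The lower bound is where the constant $m$ genuinely enters, and I expect it to be the crux. Write the distinct visited values in increasing order as $s_1 < s_2 < \cdots < s_{r_n}$, and set $M_n = \max_{k \le n} x_k = s_{r_n}$ and $m_n = \min_{k \le n} x_k = s_1$. The key claim is that consecutive gaps satisfy $s_{i+1} - s_i \le m$. Indeed, both $s_i$ and $s_{i+1}$ are attained while no visited value lies strictly between them, so the sequence takes a value $\le s_i$ at one time and a value $\ge s_{i+1}$ at another; hence there is an index $\tau$ whose consecutive values straddle the gap, and by the hypothesis $|x_{\tau+1}-x_\tau| \le m$ this forces $s_{i+1}-s_i \le m$. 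Telescoping gives $M_n - m_n = \sum_{i=1}^{r_n-1}(s_{i+1}-s_i) \le m(r_n-1)$, i.e. $r_n \ge (M_n - m_n)/m + 1$. Since $M_n \ge x_n$ and $m_n \le x_0 = 0$, we get $M_n - m_n \ge x_n \ge n(\ell-\varepsilon)$ for $n > N$, whence $\liminf r_n/n \ge (\ell-\varepsilon)/m$, and letting $\varepsilon \to 0$ produces $\liminf r_n/n \ge \ell/m = |\ell|/m$.

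The one point I would check with care is the gap claim, which is the discrete intermediate-value step carrying the whole lower bound: precisely, since the walk realizes the value $s_i$ at its hitting time and the value $s_{i+1}$ at another, there is a first index $\tau$ at which it passes from the side $\{x \le s_i\}$ to the side $\{x \ge s_{i+1}\}$ (using that nothing visited lies in the open interval $(s_i, s_{i+1})$), and the single increment there has magnitude at least $s_{i+1}-s_i$. Everything else is routine telescoping together with the same $\varepsilon$–$N$ bookkeeping already used in Proposition \ref{prop:|ell|}.
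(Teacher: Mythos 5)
Your proposal is correct, and its overall skeleton (reduction to $\ell>0$, the trivial bound $r_n\le n+1$, and the confinement argument $r_n\le r_{N-1}+n(\ell+\varepsilon)$ recycled from Proposition \ref{prop:|ell|}) matches the paper exactly; the difference lies in how the lower bound is obtained. The paper proves a \emph{maximal range inequality},
\[
\frac{\max_{0\leq k\leq n}|x_{k}-x_{0}|}{m}+1\leq r_{n},
\]
by induction on $n$: at each step either $|x_{n+1}|$ exceeds the running maximum, in which case $x_{n+1}$ is a new point and both sides of the inequality move compatibly, or it does not, in which case neither side worsens. You instead sort the distinct visited values $s_{1}<\cdots<s_{r_{n}}$, prove via a discrete intermediate-value argument that consecutive gaps satisfy $s_{i+1}-s_{i}\leq m$ (your straddling-increment step is sound: taking the last time $\tau$ before reaching the far side at which the sequence is still on the near side forces a single increment of size at least $s_{i+1}-s_{i}$, and $\tau+1$ is still among the first $n$ indices), and telescope to get $r_{n}\geq(\max_{k\le n}x_{k}-\min_{k\le n}x_{k})/m+1$. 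Your inequality is in fact slightly stronger than the paper's, since the full spread $\max-\min$ dominates $\max_{k}|x_{k}-x_{0}|$, and it makes the geometric mechanism transparent: the visited set cannot contain a gap wider than $m$. The paper's induction is shorter and requires no sorting, but hides this picture. From either lemma, the $\varepsilon$--$N$ bookkeeping that finishes the proof is identical, and your handling of the $\ell<0$ and $\ell=0$ cases (negation, and the $m$-independent confinement bound, respectively) agrees with the paper's.
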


The proof is based on the following technical lemma.

\begin{lem}
\textbf{\textup{(Maximal range inequality)}}

Under the previous assumption we have
\begin{equation}
\frac{{\displaystyle \max_{0\leq k\leq n}\begin{vmatrix}x_{k}-x_{0}\end{vmatrix}}}{m}+1\leq r_{n}\label{maximal inequality}
\end{equation}
\end{lem}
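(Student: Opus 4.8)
The plan is to avoid first-passage times to the evenly spaced levels $0,m,2m,\dots$, since that approach only yields the weaker estimate $r_n\ge\lfloor \max_k|x_k-x_0|/m\rfloor+1$ and so loses the fractional part; instead I would reason about the entire set of distinct values the sequence assumes. First I would normalize $x_0=0$ and write $M=\max_{0\le k\le n}|x_k|$; if $M=0$ the claim is immediate, so assume $M>0$. Since replacing $(x_k)$ by $(-x_k)$ leaves both the hypothesis $|x_{k+1}-x_k|\le m$ and the cardinality $r_n$ unchanged, I may assume the maximum is attained at a positive value, i.e. $x_{k^\ast}=M$ for some $0\le k^\ast\le n$.

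Next I would list the distinct values taken by $x_0,\dots,x_{k^\ast}$ that lie in $[0,M]$ and sort them as
\[
0=d_0<d_1<\cdots<d_s=M,
\]
noting that both endpoints occur because $x_0=0$ and $x_{k^\ast}=M$. The heart of the argument is the claim that no consecutive gap exceeds $m$, that is, $d_{i+1}-d_i\le m$ for every $i$. I would establish this by a discrete intermediate-value argument: fix $i$ and let $\sigma$ be the first time in $\{0,\dots,k^\ast\}$ with $x_\sigma\ge d_{i+1}$, which exists since $x_{k^\ast}=M\ge d_{i+1}$ and satisfies $\sigma\ge 1$ since $x_0=0<d_{i+1}$. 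Then $x_{\sigma-1}<d_{i+1}$, whence $x_{\sigma-1}\le d_i$ (a term below $d_{i+1}$ is either negative, so $\le 0\le d_i$, or a listed value not exceeding $d_i$). Consequently $x_\sigma-x_{\sigma-1}\ge d_{i+1}-d_i$, and the increment bound $|x_\sigma-x_{\sigma-1}|\le m$ forces $d_{i+1}-d_i\le m$.

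With the gap bound in hand the proof finishes by telescoping:
\[
M=d_s-d_0=\sum_{i=0}^{s-1}(d_{i+1}-d_i)\le s\,m,
\]
so that $s\ge M/m$. Since $d_0,\dots,d_s$ are $s+1$ distinct values assumed among $x_0,\dots,x_n$, we obtain $r_n\ge s+1\ge M/m+1$, which is (\ref{maximal inequality}).

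I expect the main obstacle to be precisely the sharp gap estimate $d_{i+1}-d_i\le m$: obtaining the exact constant (rather than $\lfloor M/m\rfloor+1$) hinges on arguing that the step which first carries the path above a gap must originate at or below the lower endpoint $d_i$ of that gap, which is exactly where the definition of the $d_i$ as \emph{consecutive} visited values is used. The remaining bookkeeping — the reduction to a positive maximum via $x\mapsto -x$ and the telescoping sum — is routine.
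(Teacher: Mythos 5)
Your proof is correct, but it follows a genuinely different route from the paper's. The paper proceeds by induction on $n$: writing $M_n=\max_{0\leq k\leq n}|x_k|$ (after normalizing $x_0=0$), the inductive step splits into two cases according to whether $|x_{n+1}|>M_n$, in which case $x_{n+1}$ is necessarily a value not yet visited, so $r_{n+1}=r_n+1$ while $M_{n+1}\leq M_n+m$, or $|x_{n+1}|\leq M_n$, in which case $M_{n+1}=M_n$ and $r_{n+1}\geq r_n$; in either case the inequality propagates. Your argument is instead direct and non-inductive: you reduce to the case where the extremum is a positive maximum $x_{k^\ast}=M$, sort the distinct visited values lying in $[0,M]$, prove the sharp gap bound $d_{i+1}-d_i\leq m$ by a first-crossing (discrete intermediate value) argument, and telescope. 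Both arguments give the exact constant and are of comparable length; the paper's induction is more mechanical and never needs to locate the maximum, while your structural decomposition actually yields a slightly stronger statement for free: applying the same gap-and-telescope count separately to the visited values in $[\min_{0\leq k\leq n}x_k,\,0]$ and in $[0,\,\max_{0\leq k\leq n}x_k]$ (the value $0$ being counted once) gives
\[
\frac{\max_{0\leq k\leq n}x_k-\min_{0\leq k\leq n}x_k}{m}+1\leq r_n,
\]
which implies (\ref{maximal inequality}) since $\max_{0\leq k\leq n}|x_k-x_0|\leq \max_{0\leq k\leq n}x_k-\min_{0\leq k\leq n}x_k$.
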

\begin{proof}
We proceed by induction on $n$ and without loss of generality we
may assume $x_{0}=0$ (otherwise replace each $x_n$ by $x_n-x_0$). The property holds for $n=0$ since
\[
\underset{=0}{\underbrace{\frac{{\displaystyle \max_{0\leq k\leq0}\begin{vmatrix}x_{k}\end{vmatrix}}}{m}}}+1\leq1=r_{0}
\]
For $n>0$ set $V_{n}:=\{x_{0},...,x_{n}\}$. Since ${\displaystyle r_{n+1}=r_{n}+\mathbf{1}_{\left\{ x_{n+1}\notin V_{n}\right\} }}$
two situation occur:
\begin{itemize}
\item If $|x_{n+1}|>{\displaystyle \max_{0\leq k\leq n}\begin{vmatrix}x_{k}\end{vmatrix}}$
then $x_{n+1}\notin V_{n}$ and so
\[
\begin{aligned}\max_{0\leq k\leq n+1}\begin{vmatrix}x_{k}\end{vmatrix} & =\begin{vmatrix}x_{n+1}\end{vmatrix}\\
 & \leq\begin{vmatrix}x_{n}\end{vmatrix}+m\\
 & \leq{\displaystyle \max_{0\leq k\leq n}\begin{vmatrix}x_{k}\end{vmatrix}}+m
\end{aligned}
\]
and then
\[
\begin{aligned}r_{n+1} & =r_{n}+1\\
 & \geq\frac{{\displaystyle \max_{0\leq k\leq n}\begin{vmatrix}x_{k}\end{vmatrix}}+2m}{m} & \text{ \ensuremath{{\scriptstyle (induction)}}}\\
 & \geq\frac{{\displaystyle \max_{0\leq k\leq n+1}\begin{vmatrix}x_{k}\end{vmatrix}}}{m}+1
\end{aligned}
\]
\item If $\begin{vmatrix}x_{n+1}\end{vmatrix}\leq{\displaystyle \max_{0\leq k\leq n}\begin{vmatrix}x_{k}\end{vmatrix}}$
then
\[
r_{n+1} \geq r_{n}\geq{\displaystyle \frac{{\displaystyle \max_{0\leq k\leq n}\begin{vmatrix}x_{k}\end{vmatrix}}}{m}}+1={\displaystyle \frac{{\displaystyle \max_{0\leq k\leq n+1}\begin{vmatrix}x_{k}\end{vmatrix}}}{m}}+1
\]
\end{itemize}
Thus the property remains true for $r_{n+1}$, and so (\ref{maximal inequality})
holds for all $n$.
\end{proof}
Now, we are ready to prove Proposition \ref{inequality lim range}. As usual
we assume that $\ell>0$ and $x_{0}=0$, then there exists $N>0$
such that
\[
n(\ell-\varepsilon)\leq x_{n}\leq n(\ell+\varepsilon)\,\,\,\,\text{for \ensuremath{n>N} }
\]
where $0<\varepsilon<\ell$. In particular, by the previous lemma,
we obtain
\[
\frac{\lfloor n(\ell-\varepsilon)\rfloor}{nm}\leq\frac{r_{n}}{n}\leq\frac{\lfloor n(\ell+\varepsilon)\rfloor+r_{N-1}}{n}
\]
where $\lfloor\cdot\rfloor$ denotes the ceiling function. Therefore,
as $\frac{r_{n}}{n}$ does not exceed $1$, we conclude the result.

The case $\ell<0$ is similar by considering
$-x_{n}$ instead. It is straightforward to verify that if $\ell=0$ then $\frac{r_{n}}{n} \to 0$ with an argument analogous to that in Proposition \ref{prop:|ell|}, and the result follows.

\vski

A consequence of Proposition \ref{inequality lim range} is the following.

\begin{prop}
The following statements are equivalent :
\begin{enumerate}
\item $\underset{{\scriptscriptstyle n\rightarrow+\infty}}{\lim}\frac{{\displaystyle \max_{0\leq k\leq n}|x_{n}|}}{n}=0$.
\item $\underset{{\scriptscriptstyle n\rightarrow+\infty}}{\lim}\frac{{\displaystyle x_{n}}}{n}=0$
\item $\underset{{\scriptscriptstyle n\rightarrow+\infty}}{\lim}\frac{{\displaystyle r_{n}}}{n}=0$
\end{enumerate}
\end{prop}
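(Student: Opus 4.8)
The plan is to prove the cyclic chain of implications $(1)\Rightarrow(2)\Rightarrow(3)\Rightarrow(1)$, which establishes the equivalence of all three statements at once. Throughout I write $M_n=\max_{0\le k\le n}|x_k|$ for the running maximum, and I assume without loss of generality that $x_0=0$; since $x_0$ is a fixed constant, replacing $|x_k|$ by $|x_k-x_0|$ alters $M_n$ by at most $|x_0|=O(1)$, which is irrelevant after dividing by $n$. The point to emphasize at the outset is that essentially all of the substantive work has already been done: the heavy lifting lives in the Maximal range inequality and in Proposition~\ref{inequality lim range}, so each of the three implications reduces to a one-line estimate.

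First I would dispatch $(1)\Rightarrow(2)$, which is immediate: since $|x_n|\le M_n$ for every $n$, dividing by $n$ and squeezing gives $|x_n|/n\to 0$, hence $x_n/n\to 0$. Next, $(2)\Rightarrow(3)$ is nothing other than the case $\ell=0$ of Proposition~\ref{inequality lim range}: if $x_n/n\to 0$ then that proposition applies with $\ell=0$, and its stated conclusion in that case is precisely $r_n/n\to 0$.

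For $(3)\Rightarrow(1)$ I would invoke the Maximal range inequality in the form $\tfrac{M_n}{m}+1\le r_n$, which is valid under the standing assumption $x_0=0$. Rearranging gives $M_n\le m(r_n-1)\le m\,r_n$, so that $\tfrac{M_n}{n}\le m\,\tfrac{r_n}{n}\to 0$, which is exactly statement $(1)$. This closes the cycle.

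I expect no genuine obstacle, since the cycle is arranged so that each step follows directly from a result already in hand. The one conceptually interesting feature worth flagging is the implication $(2)\Rightarrow(1)$, obtained here indirectly through $(3)$: a priori, control of $x_n/n$ says nothing about the size of the excursions made before time $n$, yet because the increments are bounded by $m$, a displacement of size $\sim cn$ can only be attained at a time $k\ge cn/m$ that is comparable to $n$, which forces $|x_k|/k$ to stay bounded away from $0$ along a subsequence and so contradicts $(2)$. This is exactly the geometric fact that the Maximal range inequality encodes — namely that the range dominates the maximal displacement divided by $m$ — and routing the argument through $(3)$ lets us reuse it rather than reprove it.
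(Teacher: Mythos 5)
Your proof is correct and follows essentially the same route as the paper: the paper's proof is a diagram asserting $1\Rightarrow 2$ (obvious), $2\Leftrightarrow 3$ via Proposition~\ref{inequality lim range}, and $3\Rightarrow 1$ via the maximal range inequality~(\ref{maximal inequality}), and your cycle $(1)\Rightarrow(2)\Rightarrow(3)\Rightarrow(1)$ is exactly the subset of those arrows needed for equivalence, justified by the same two results. If anything, your version is slightly cleaner, since the paper's direct arrow $3\Rightarrow 2$ from~(\ref{eq:inequality-1}) presupposes that $\lim x_n/n$ exists, whereas your routing of that implication through the maximal inequality avoids this issue.
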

\begin{proof}
It follows from
\[
\xymatrix{1\ar[rr]^{\text{obvious}} &  & 2\\
 & 3\ar@{->}[lu]^{\eqref{maximal inequality}}\ar@{<->}[ru]_{\eqref{eq:inequality-1}}
}
\]
\end{proof}

It should be noted that in higher dimensions the Kesten-Spitzer-Whitman theorem still holds, while our Proposition \ref{prop:|ell|} and ensuing results do not. An easy example proves this: take $x_n$ to be a sequence which essentially fills the space in $\RR^2$, for instance one which winds in a spiral shape around the origin; in this case, if no vertices are repeated, then $r_n =n$ but $\frac{x_n}{n} \to 0$, and $|x_n - x_{n-1}|$ can be taken to be 1. However, we still have a lower bound for ${\displaystyle \liminf_{+\infty}}\frac{r_{n}}{n}$ in terms of $\lim_{n \to \infty}\frac{|x_n|}{n}$. The proof of the following is obtained by the same techniques as for Proposition \ref{inequality lim range}.

\begin{prop}
Let $x_{n}$ be a sequence of $\mathbb{Z}^{d}$ $(d>1)$ such that
$\begin{Vmatrix}x_{n+1}-x_{n}\end{Vmatrix}_{2}\leq m$ for some positive
integer $m$. If $\underset{{\scriptscriptstyle n\rightarrow+\infty}}{\lim}\frac{x_{n}}{n}=\ell$
then
\[
\frac{\begin{Vmatrix}{\scriptstyle \ell}\end{Vmatrix}_{2}}{m}\leq\liminf_{+\infty}\frac{r_{n}}{n}
\]
\end{prop}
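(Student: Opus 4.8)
The plan is to establish the $d$-dimensional analogue of the Maximal range inequality \eqref{maximal inequality}, now measuring distance in the Euclidean norm, and then to feed it into a liminf argument exactly as in Proposition \ref{inequality lim range}. Concretely, I would prove that under the hypothesis $\|x_{n+1}-x_n\|_2 \le m$ one has
\[
\frac{\max_{0\le k\le n}\|x_k-x_0\|_2}{m}+1\le r_n .
\]
As before there is no loss in assuming $x_0=0$, so that the quantity to control is $M_n:=\max_{0\le k\le n}\|x_k\|_2$.

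The proof of this inequality should run by induction on $n$, following the two-case split of the one-dimensional Lemma verbatim. The base case $n=0$ gives $M_0=0$ and $r_0=1$, which is an equality. For the inductive step, the decisive observation is that the maximum-norm criterion still forces genuinely new points: if $\|x_{n+1}\|_2 > M_n$ then $x_{n+1}$ lies strictly outside the closed ball of radius $M_n$ containing $\{x_0,\dots,x_n\}$, hence $x_{n+1}\notin V_n$ and $r_{n+1}=r_n+1$; meanwhile the triangle inequality gives $M_{n+1}=\|x_{n+1}\|_2\le\|x_n\|_2+m\le M_n+m$, so the induction hypothesis yields $r_{n+1}=r_n+1\ge M_n/m+2\ge M_{n+1}/m+1$. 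In the complementary case $\|x_{n+1}\|_2\le M_n$ we have $M_{n+1}=M_n$ and $r_{n+1}\ge r_n$, so the bound is preserved. This is the only place where the geometry of $\mathbb{Z}^d$ enters, and it is exactly where the matching upper bound breaks down: the spiral example shows that a step need not produce a new point when it fails to increase the running maximum, which is why only a lower bound survives in dimension $d>1$.

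With the inequality in hand the conclusion is immediate. Since $\|x_n\|_2\le M_n$ we get $r_n\ge \|x_n\|_2/m+1$, and dividing by $n$ gives
\[
\frac{r_n}{n}\ge \frac{\|x_n\|_2}{nm}+\frac1n .
\]
The hypothesis $x_n/n\to\ell$ forces $\|x_n\|_2/n\to\|\ell\|_2$ by continuity of the norm, so letting $n\to\infty$ yields $\liminf_{+\infty} r_n/n\ge \|\ell\|_2/m$, as required. I do not anticipate a serious obstacle here; the one step deserving care is the new-point observation in the inductive step, since it is what replaces the order structure of $\mathbb{Z}$ exploited in the one-dimensional argument.
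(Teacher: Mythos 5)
Your proof is correct and follows exactly the route the paper intends: it establishes the $d$-dimensional analogue of the maximal range inequality \eqref{maximal inequality} by the same two-case induction (the only genuinely new point being that a step strictly increasing $\max_{0\le k\le n}\|x_k\|_2$ must leave the ball containing $V_n$), and then divides by $n$ and passes to the limit as in Proposition \ref{inequality lim range}. The paper omits the details, stating only that the proof uses the same techniques, so your write-up is a faithful and complete execution of that plan.
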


\section{The zero-speed case}

Many random walks which are recurrent are known to satisfy $\frac{X_n}{n} \to 0$ a.s. (e.g. simple random walk in $\ZZ$ and $\ZZ^2$), and it is natural to ask whether this should hold for deterministic sequences as well. However, this common occurance really is a probabilistic one, and not deterministic. To be precise, we have the following proposition.

\begin{prop}
\label{thm:For-every-}For every $\ell\in (0,1)$ there exists a sequence
$x:=(x_{n})_{n \geq 0}$ such that

\begin{itemize} \label{}

\item[(i)]  $|x_n-x_{n-1}| \leq 1$ for all $n$.

\item[(ii)] $x_n=0$ for infinitely many $n$.

\item[(iii)] $\underset{{\scriptscriptstyle n\rightarrow+\infty}}{\limsup}\frac{x_{n}}{n} \geq \ell$.

\end{itemize}
\end{prop}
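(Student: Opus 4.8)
The plan is to construct $x_n$ explicitly as a concatenation of successively larger ``tent'' excursions, each of which climbs up to a peak and then returns to $0$. The key idea is to choose the climbing length of each excursion to be a fixed proportion of the total time elapsed so far; this is exactly what is needed to overcome the dilution of $x_n/n$ caused by the ever-growing denominator, which is what drags the ratio back down after each return to $0$.

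Concretely, I would define return times $t_1 < t_2 < \cdots$ at which the sequence sits at $0$, together with climbing lengths $a_k$, recursively. Starting from $x_0 = 0$ with $t_1 = 0$, once the sequence has reached $0$ at time $t_k$, let the $k$-th excursion take $+1$ steps for $a_k$ units of time, attaining the peak height $a_k$ at time $t_k + a_k$, and then take $-1$ steps for $a_k$ units of time back to $0$ at time $t_{k+1} = t_k + 2a_k$. The crucial choice is
\[
a_k = \max\left(1, \left\lceil \frac{\ell}{1-\ell}\, t_k \right\rceil \right).
\]

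I would then verify the three conditions. Condition (i) is immediate, since every step is $\pm 1$. For (ii), the sequence returns to $0$ at each $t_k$, and since $t_{k+1} = t_k + 2a_k \geq t_k \cdot \frac{1+\ell}{1-\ell}$ with $\frac{1+\ell}{1-\ell} > 1$ for $\ell \in (0,1)$, the times $t_k$ grow geometrically to $+\infty$, yielding infinitely many returns. For (iii), at the peak of the $k$-th excursion, reached at time $n_k := t_k + a_k$, the defining inequality $a_k \geq \frac{\ell}{1-\ell} t_k$ rearranges to $\frac{a_k}{t_k + a_k} \geq \ell$, that is $\frac{x_{n_k}}{n_k} \geq \ell$; since $n_k \to +\infty$, this forces $\limsup_{n\to\infty} x_n/n \geq \ell$.

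The construction is essentially routine once the correct growth rate is identified, so I do not expect a serious obstacle. The one point requiring care is the self-consistency of the recursion, namely checking that the proportionality constant $\frac{\ell}{1-\ell}$ really does produce peaks with ratio $\geq \ell$ while simultaneously keeping the return times finite and strictly increasing. A related technical nuisance is the integer rounding built into the ceiling and the $\max$ with $1$, which is there only to prevent the degenerate first excursion (where $t_1 = 0$) from having zero length; I would confirm that the peak ratio is $\geq \ell$ for every $k$, including the initial step where the ratio equals $1$, so that the $\limsup$ bound holds on the nose.
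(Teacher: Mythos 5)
Your proof is correct, and it is conceptually the same construction as the paper's: concatenated ``tent'' excursions whose return times to $0$ grow geometrically with ratio $\frac{1+\ell}{1-\ell}$. Indeed, the paper takes $\tau_{n}=2\lfloor\left(\frac{1+\ell}{1-\ell}\right)^{n+n_{0}}\rfloor$ explicitly, after first doing the case $\ell=\frac{1}{2}$ with $\tau_{n}=2\cdot3^{n}$, and your recursion $t_{k+1}=t_{k}+2a_{k}\geq\frac{1+\ell}{1-\ell}\,t_{k}$ reproduces exactly that growth rate. Where you differ is in the bookkeeping, and the difference works in your favor. The paper prescribes the return times by a closed formula and must then check that the peak ratios survive the integer rounding; it verifies this only for $\ell=\frac{1}{2}$ and explicitly omits the general case as ``more difficult.'' You instead choose each climbing length recursively, $a_{k}=\max\left(1,\lceil\frac{\ell}{1-\ell}t_{k}\rceil\right)$, as a fixed proportion of the time elapsed so far, so the inequality $a_{k}\geq\frac{\ell}{1-\ell}t_{k}$ holds by construction (the ceiling can only help), and the peak bound $\frac{a_{k}}{t_{k}+a_{k}}\geq\ell$ follows from a one-line rearrangement valid since $1-\ell>0$; the degenerate start $t_{1}=0$ is covered by the $\max$ with $1$, giving ratio $1\geq\ell$. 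In effect, your recursive choice makes the rounding error harmless, which is precisely the technical point the paper leaves unresolved: your argument is a complete and uniform proof for all $\ell\in(0,1)$, whereas the paper's written proof is complete only for $\ell=\frac{1}{2}$.
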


\begin{proof}
We give an explicit construction for $\ell = \frac{1}{2}$, and then indicate how it may be extended to arbitrary $\ell$. Define two sequences $\tau_{n}$ and $t_{n}$ by $\tau_{n} = 2 \cdot 3^n$ and $t_{n} =\frac{\tau_{n}+\tau_{n-1}}{2}$. Now we
define our sequence $x_{n}$ as follows (with $\tau_{-1}=0$)
\[
x_{k}=\begin{cases}
k-\tau_{n-1} & \text{if }\ensuremath{\tau_{n-1} \leq k < t_{n}}\\
\tau_{n}-k & \text{if \ensuremath{t_{n} \leq k < \tau_{n}}}
\end{cases}
\]

Clearly $x_{\tau_n} = 0$, so $x_k = 0$ infinitely often. Note also that $x_{t_n} = \frac{\tau_n - \tau_{n-1}}{2}$, so that $\frac{x_{t_n}}{t_n} = \frac{\tau_n - \tau_{n-1}}{\tau_n + \tau_{n-1}} = \frac{2 \cdot 3^n - 2 \cdot 3^{n-1}}{2 \cdot 3^n + 2 \cdot 3^{n-1}} = \frac{1}{2}$. In order to achieve a larger $\ell < 1$, we let $n_{0}$ be an integer so that

\begin{equation}
\left(\frac{1+\ell}{1-\ell}\right)^{n_{0}}(\frac{2\ell}{1-\ell})>1\label{eq:1}
\end{equation}

Then, we form our sequence in the same manner as before, but with

\[
\begin{alignedat}{1}\tau_{n} & :=2\lfloor\left(\frac{1+\ell}{1-\ell}\right)^{n+n_{0}}\rfloor\\
t_{n} & :=\frac{\tau_{n}+\tau_{n-1}}{2}
\end{alignedat}
\]

It can then be shown that $x_{\tau_n} = 0$, so $x_k = 0$ infinitely often, but we also have $\limsup_{n \to \infty} \frac{x_{t_n}}{t_n} \geq \ell$. Details are more difficult that for the $\ell = \frac{1}{2}$ case, and are omitted.
\end{proof}

On the other hand, there is a sense in which we may interpret the speed of $x_n$ as being deterministic, namely that is controlled by the sequence $(\tau_{k})_{k}$, which we define to be the time of the $k$-th visit to $0$ by the sequence $x$. In particular, we have the following result.

\begin{prop}
If $x\in R$ then
\[
\limsup_{n \to \infty} \frac{|x_n|}{n} \leq\limsup_{k}\frac{1}{2}\left(\frac{\tau_{k}}{\tau_{k-1}}-1\right)
\]
\end{prop}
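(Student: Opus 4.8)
The plan is to exploit the bounded-increment condition to trap $|x_n|$ between two consecutive zeros of the sequence. Recall that $\tau_k$ denotes the time of the $k$-th visit to $0$, so that $x_{\tau_k}=0$ for every $k$; membership in $R$ furnishes $|x_{j}-x_{j-1}|\le 1$ together with infinitely many returns to $0$, so the $\tau_k$ are well defined and $\tau_k\to+\infty$.

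First I would fix $n$ and let $k=k(n)$ be the unique index with $\tau_{k-1}\le n<\tau_k$. Since $x$ vanishes at both endpoints of this interval and the increments are at most $1$ in absolute value, the triangle inequality gives $|x_n|=|x_n-x_{\tau_{k-1}}|\le n-\tau_{k-1}$ and, running backwards from $\tau_k$, $|x_n|=|x_{\tau_k}-x_n|\le \tau_k-n$. Hence
\[
|x_n|\le \min\bigl(n-\tau_{k-1},\,\tau_k-n\bigr)\le \frac{\tau_k-\tau_{k-1}}{2},
\]
the last step because the minimum of these two linear functions is largest at the midpoint $n=(\tau_{k-1}+\tau_k)/2$.

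Next, since $n\ge \tau_{k-1}$, I would divide to obtain
\[
\frac{|x_n|}{n}\le \frac{\tau_k-\tau_{k-1}}{2\,\tau_{k-1}}=\frac12\left(\frac{\tau_k}{\tau_{k-1}}-1\right).
\]
This is the crux of the matter: every individual term $|x_n|/n$ is dominated by the corresponding return-time ratio $a_{k(n)}$, where I set $a_k:=\tfrac12\bigl(\tau_k/\tau_{k-1}-1\bigr)$.

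Finally, taking the $\limsup$ over $n$, I would use that $k(n)\to\infty$ as $n\to\infty$, so that the values $a_{k(n)}$ form, for $n$ large, a subfamily of $\{a_k\}_k$; therefore $\limsup_{n}a_{k(n)}\le \limsup_{k}a_k$, which is exactly the claimed bound. I expect no serious obstacle here, as the argument is genuinely elementary. The only points requiring a little care are the indexing convention for $\tau_k$ near the start of the sequence (immaterial for the $\limsup$, since only large $n$ matter and $\tau_{k-1}\to\infty$) and the degenerate endpoint $n=\tau_{k-1}$, where $x_n=0$ and the bound holds trivially.
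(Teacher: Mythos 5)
Your proposal is correct and follows essentially the same route as the paper's own proof: trap each $n$ in an interval $[\tau_{k-1},\tau_k)$ between consecutive zeros, bound $|x_n|$ by half the gap $\tau_k-\tau_{k-1}$ using the unit increments, divide by $n\ge\tau_{k-1}$, and pass to the $\limsup$. If anything, you are slightly more careful than the paper, since you justify the half-gap bound via the two-sided triangle inequality (from both $\tau_{k-1}$ and $\tau_k$), a step the paper asserts without proof, and you note the harmless indexing issues near $n=\tau_0$.
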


\begin{proof}
If we denote by $t_{k}$ the time between $\tau_{k-1}$ and $\tau_{k}$
so that $\begin{vmatrix}x_{t_{n}}\end{vmatrix}$ is maximal, then
\[
\begin{vmatrix}x_{t_{k}}\end{vmatrix}\leq\frac{\tau_{k}-\tau_{k-1}}{2}
\]

Now, every $n$ lies inside some $[\tau_{k_{n}-1},\tau_{k_{n}})$
and therefore
\[
\begin{vmatrix}\frac{x_{n}}{n}\end{vmatrix}\leq\begin{vmatrix}\frac{x_{t_{k_{n}}}}{\tau_{k_{n}-1}}\end{vmatrix}\leq\frac{1}{2}\left(\frac{\tau_{k_{n}}}{\tau_{k_{n}-1}}-1\right),
\]

and the result follows.
\end{proof}

\begin{cor}
If a sequence $x\in R$ hits zero at times $\tau_{0}<\tau_{1}<...<\tau_{k}<...$
such that $\frac{\tau_{k}}{\tau_{k-1}}$ converge to $1$, then $x$
has zero speed.
\end{cor}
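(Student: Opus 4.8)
The plan is to read this off directly from the preceding proposition, since the hypothesis $\frac{\tau_k}{\tau_{k-1}} \to 1$ is engineered precisely to make the right-hand side of that bound vanish. First I would invoke the previous proposition, which for any $x \in R$ asserts
\[
\limsup_{n \to \infty} \frac{|x_n|}{n} \leq \limsup_{k}\frac{1}{2}\left(\frac{\tau_k}{\tau_{k-1}}-1\right).
\]
The corollary is then obtained by controlling the quantity on the right using the convergence hypothesis.

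Next, under the assumption that $\frac{\tau_k}{\tau_{k-1}}$ converges to $1$, the sequence $\frac{1}{2}\left(\frac{\tau_k}{\tau_{k-1}}-1\right)$ converges to $\frac{1}{2}(1-1)=0$. Since a convergent sequence has its $\limsup$ equal to its limit, the right-hand side above is equal to $0$, and therefore
\[
\limsup_{n \to \infty} \frac{|x_n|}{n} \leq 0.
\]

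Finally, because $\frac{|x_n|}{n} \geq 0$ for every $n$, the corresponding $\liminf$ is also nonnegative, so this upper bound forces $\lim_{n\to\infty}\frac{|x_n|}{n}=0$, which is exactly the statement that $x$ has zero speed. I do not expect any genuine obstacle here, as the result is an immediate specialization of the preceding proposition; the only point deserving a word of care is the passage from a $\limsup$ bound to an actual limit, which is justified by the nonnegativity of $\frac{|x_n|}{n}$.
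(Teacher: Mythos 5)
Your proof is correct and is exactly the deduction the paper intends: the corollary is stated with no separate proof precisely because it follows immediately from the preceding proposition, with $\frac{\tau_k}{\tau_{k-1}}\to 1$ forcing the right-hand side to vanish and nonnegativity of $\frac{|x_n|}{n}$ upgrading the $\limsup$ bound to an actual limit of zero. Nothing is missing.
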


This shows that if the speed of $x$ is not zero then
necessarily $\frac{\tau_{k}}{\tau_{k-1}}$ does not converge to $1$. As an example, the explicit sequence constructed in Proposition \ref{thm:For-every-} has $\frac{\tau_k}{\tau_{k-1}} = 3$ for all $k$.

\vski

We now present a final application of these ideas. For the purpose of the next proposition, we consider a random walk to be any stochastic process taking values on the integers.

\begin{prop}
\label{prop:Let--be} Let $(X_{n})_{n}$ be any recurrent random walk with
increments taking values in $\{0,\pm1\}$ and which returns to zero at times $\tau_{0}<\tau_{1}<...<\tau_{k}<...$.
If the the sequence $(\tau_{k}-\tau_{k-1})_{k}$ is ergodic, then
$(X_{n})_{n}$ has zero speed.
\end{prop}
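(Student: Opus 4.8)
The plan is to reduce the statement to the deterministic Corollary proved above: since the sample paths of $(X_n)_n$ lie in $R$, it suffices to show that, almost surely, $\frac{\tau_k}{\tau_{k-1}} \to 1$, as the Corollary then forces zero speed. Writing $D_k := \tau_k - \tau_{k-1}$ for the inter-arrival times, these are by hypothesis an ergodic sequence, and since the $\tau_k$ are strictly increasing integers we have $D_k \geq 1$; in particular $\tau_{k-1} = \tau_0 + \sum_{j=1}^{k-1} D_j \geq k-1$. Because $\frac{\tau_k}{\tau_{k-1}} = 1 + \frac{D_k}{\tau_{k-1}}$, the whole problem reduces to establishing $\frac{D_k}{\tau_{k-1}} \to 0$ almost surely.

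The first step is to apply Birkhoff's ergodic theorem to $(D_k)$, which gives $\frac{\tau_k}{k} = \frac{\tau_0}{k} + \frac{1}{k}\sum_{j=1}^{k} D_j \to \mu$ a.s., where $\mu := E(D_1) \in [1,\infty]$. When $\mu < \infty$ the conclusion is immediate: from $\frac{\tau_k}{k} \to \mu$ and $\frac{\tau_{k-1}}{k} = \frac{k-1}{k}\cdot\frac{\tau_{k-1}}{k-1} \to \mu$ we obtain both $\frac{D_k}{k} = \frac{\tau_k}{k} - \frac{\tau_{k-1}}{k} \to 0$ and $\frac{\tau_{k-1}}{k} \to \mu > 0$, whence $\frac{D_k}{\tau_{k-1}} = \frac{D_k/k}{\tau_{k-1}/k} \to 0$. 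Feeding this into the Corollary yields zero speed in the finite-mean regime, which I expect to form the clean core of the argument.

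The main obstacle is the infinite-mean case $\mu = \infty$, which is in fact the typical situation for recurrent walks (for symmetric simple random walk the return-time tail has index $\tfrac12$, so $E(D_1) = \infty$). Here Birkhoff gives only $\frac{\tau_k}{k} \to \infty$, and this alone does not control $\frac{D_k}{\tau_{k-1}}$, since both $\frac{D_k}{k}$ and $\frac{\tau_{k-1}}{k}$ diverge. I would attempt to bound $\frac{D_k}{\tau_{k-1}}$ by truncating the $D_j$ at a slowly growing level, combining the ergodic theorem for the truncated sequence with a conditional Borel--Cantelli estimate for the exceedance events $\{D_k > \varepsilon\,\tau_{k-1}\}$; the delicate point, and the crux of the whole proof, is that these exceedances must be shown to occur only finitely often. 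This is where I expect the heaviest technical work to lie, and where control of the excursion structure — rather than merely the excursion lengths — is likely to be needed to push the estimate through.
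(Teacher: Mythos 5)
Your finite-mean argument is correct, and it is in essence the paper's own mechanism: bound $|X_n|/n$ by $\tfrac12(\tau_{k_n}-\tau_{k_n-1})/\tau_{k_n-1}$ and feed Birkhoff's theorem into the denominator. But the infinite-mean case that you leave open is not a technical loose end --- it is the entire problem (for simple random walk $E(\tau_1-\tau_0)=\infty$), and the route you propose for closing it provably cannot succeed. Take the gaps $D_k=\tau_k-\tau_{k-1}$ i.i.d.\ with $P(D_1=2m)\asymp m^{-3/2}$, which is exactly the return-time law of simple random walk. Then $\tau_{k-1}$ is of order $k^2$, and $P\bigl(D_k>M\tau_{k-1}\mid D_1,\dots,D_{k-1}\bigr)\asymp (M\tau_{k-1})^{-1/2}$; one checks that $\sum_k \tau_{k-1}^{-1/2}=\infty$ a.s.\ (group the indices dyadically, use tightness of $\tau_{2^m}/4^m$, and finish with Kolmogorov's $0$--$1$ law), so L\'evy's conditional Borel--Cantelli lemma gives the \emph{opposite} of what you need: for every $M$ the exceedance events $\{D_k>M\tau_{k-1}\}$ occur infinitely often almost surely. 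Hence $\limsup_k \tau_k/\tau_{k-1}=\infty$ a.s.; already for simple random walk $\tau_k/\tau_{k-1}\not\to 1$ (this is the arcsine-law phenomenon: the last zero before time $n$ is $o(n)$ along a subsequence). So no argument funneled through the Corollary, i.e.\ through $\tau_k/\tau_{k-1}\to 1$, can cover the infinite-mean regime.

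Your closing remark --- that one needs control of the excursion \emph{shapes}, not merely of their lengths --- is exactly right, and it cuts deeper than you may realize: hypotheses on the lengths alone cannot yield the conclusion. With the i.i.d.\ heavy-tailed gaps above, build $X$ by making each excursion a ``tent'' (rise for $D_k/2$ steps, descend for $D_k/2$ steps). Every hypothesis of the proposition holds (increments in $\{0,\pm1\}$, recurrence, i.i.d.\ hence ergodic inter-arrival times), yet at the apex times $n_k=\tau_{k-1}+D_k/2$ of excursions with $D_k>M\tau_{k-1}$ one has $|X_{n_k}|/n_k\geq M/(M+2)$, so $\limsup_n |X_n|/n=1$ a.s. For comparison, the paper's proof uses the same bound as you do and then asserts it tends to zero because the Ces\`aro averages of the gaps converge to $E(\tau_1-\tau_0)>0$ while $\tau_{k_n}-\tau_{k_n-1}$ is ``identically distributed''; in the infinite-mean case that step is not justified either (for identically distributed variables with infinite mean, $D_k/k\to 0$ holds in probability but can fail a.s., and the straddling index $k_n$ is random and size-biased), and the tent example shows it cannot be repaired. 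The honest summary: your proof is complete exactly when $E(\tau_1-\tau_0)<\infty$, and beyond that regime additional structure --- such as the Markov property invoked in the paper's final corollary, which forces tall excursions to be far longer than their height --- is genuinely required, by you and by the paper alike.
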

\begin{proof}
We have
\begin{equation}
\begin{vmatrix}\frac{X_{n}}{n}\end{vmatrix}\leq\begin{vmatrix}\frac{X_{t_{k_{n}}}}{\tau_{k_{n}-1}}\end{vmatrix}\leq\frac{1}{2}\left(\frac{\tau_{k_{n}}-\tau_{k_{n}-1}}{\tau_{k_n-1}}\right) \leq\frac{1}{2}\left(\frac{\tau_{k_{n}}-\tau_{k_{n}-1}}{\left[{\displaystyle \frac{1}{k_{n}}\times\sum_{j=1}^{k_{n}-1}\tau_{j}-\tau_{j-1}}\right]\times k_{n}}\right)\label{eq:ergodic}
\end{equation}
Now, ${\displaystyle \frac{1}{k_{n}}\times\sum_{j=1}^{k_{n}-1}\tau_{j}-\tau_{j-1}}$
converges a.s. to $E\left(\tau_{1}-\tau_{0}\right)$ which is positive (possibly infinite) by Birkoff's ergodic theorem, and since the distribution of the r.v $\tau_{k_{n}}-\tau_{k_{n}-1}$
does not depend on $n$ then the right hand side in \ref{eq:ergodic} goes to zero as $n$ goes to $\infty$.
\end{proof}

We recover a classical result (\cite[p.8]{solomon}) illustrated by the following corollary.

\begin{cor}
If $(X_{n})_{n}$ is a recurrent Markov chain on $\ZZ$ with increments $0,\pm1$
(eg. a birth-death chain) then its speed is zero.
\end{cor}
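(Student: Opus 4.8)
The plan is to derive this corollary directly from Proposition~\ref{prop:Let--be}. The key observation is that a recurrent Markov chain on $\ZZ$ with increments in $\{0,\pm1\}$ produces return times to zero whose inter-arrival gaps form an ergodic sequence, which is precisely the hypothesis needed to invoke the previous proposition.

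First I would set up the renewal structure. Let $\tau_0 < \tau_1 < \dots$ denote the successive times at which $(X_n)_n$ visits $0$. Because recurrence guarantees infinitely many returns to $0$ almost surely, these times are all finite and well-defined. The crucial step is to argue that the increment sequence $(\tau_k - \tau_{k-1})_k$ is ergodic. For a recurrent Markov chain, the strong Markov property applied at each successive hitting time of $0$ shows that the excursion lengths $\tau_k - \tau_{k-1}$ are independent and identically distributed (the chain regenerates each time it hits $0$, and the law of the next excursion depends only on the current state, which is always $0$). An i.i.d.\ sequence is in particular stationary and ergodic, so the hypothesis of Proposition~\ref{prop:Let--be} is met.

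Second I would simply apply Proposition~\ref{prop:Let--be} to conclude that $(X_n)_n$ has zero speed. The increments take values in $\{0,\pm 1\}$ by assumption, recurrence is given, and ergodicity of $(\tau_k - \tau_{k-1})_k$ has just been established, so all hypotheses are verified and the conclusion follows immediately. The parenthetical remark about birth-death chains is just an instance: such a chain on $\ZZ$ moves by at most one step at a time, so its increments lie in $\{0,\pm1\}$, and when it is recurrent the corollary applies.

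The main obstacle is justifying the i.i.d.\ (hence ergodic) nature of the inter-arrival times with the appropriate degree of rigor. The regeneration argument via the strong Markov property is standard, but one should be careful that it is the \emph{differences} $\tau_k-\tau_{k-1}$ that inherit the i.i.d.\ structure, not the $\tau_k$ themselves, and that recurrence is exactly what ensures each $\tau_k$ is almost surely finite so that the sequence of differences is well-defined and the renewal structure does not terminate. Once this point is handled cleanly, the rest is a direct citation of the preceding proposition.
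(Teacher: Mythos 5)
Your proposal is correct and matches the paper's approach exactly: the paper's one-line proof likewise invokes Proposition~\ref{prop:Let--be} by noting that the inter-return times to zero of a Markov chain are independent (hence i.i.d.\ by the strong Markov property at successive hits of $0$, hence ergodic). Your write-up simply makes explicit the regeneration argument that the paper leaves implicit.
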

\begin{proof}
The inter-return times to zero are independent for a Markov chain.
\end{proof}

\bibliographystyle{IEEEtranN}
\bibliography{bibfile,\string"C:/Users/mbou0007/Google Drive/Maheref\string"}

\begin{thebibliography}{6}
\providecommand{\natexlab}[1]{#1}
\providecommand{\url}[1]{#1}
\csname url@samestyle\endcsname
\providecommand{\newblock}{\relax}
\providecommand{\bibinfo}[2]{#2}
\providecommand{\BIBentrySTDinterwordspacing}{\spaceskip=0pt\relax}
\providecommand{\BIBentryALTinterwordstretchfactor}{4}
\providecommand{\BIBentryALTinterwordspacing}{\spaceskip=\fontdimen2\font plus
\BIBentryALTinterwordstretchfactor\fontdimen3\font minus
  \fontdimen4\font\relax}
\providecommand{\BIBforeignlanguage}[2]{{%
\expandafter\ifx\csname l@#1\endcsname\relax
\typeout{** WARNING: IEEEtranN.bst: No hyphenation pattern has been}%
\typeout{** loaded for the language `#1'. Using the pattern for}%
\typeout{** the default language instead.}%
\else
\language=\csname l@#1\endcsname
\fi
#2}}
\providecommand{\BIBdecl}{\relax}
\BIBdecl

\bibitem[Beiglb{\"o}ck and Siorpaes(2015)]{beig}
M.~Beiglb{\"o}ck and P.~Siorpaes, ``Pathwise versions of the
  {B}urkholder--{D}avis--{G}undy inequality,'' \emph{Bernoulli}, vol.~21,
  no.~1, pp. 360--373, 2015.

\bibitem[Acciaio et~al.(2013)Acciaio, Beiglb\"ock, Penkner, Schachermayer, and
  Temme]{acc}
B.~Acciaio, M.~Beiglb\"ock, F.~Penkner, W.~Schachermayer, and J.~Temme, ``A
  trajectorial interpretation of {D}oob's martingale inequalities,''
  \emph{Annals of Applied Probability}, vol.~23, no.~4, pp. 1494--1505, 2013.

\bibitem[Spitzer(2013)]{spitz}
F.~Spitzer, \emph{Principles of random walk}.\hskip 1em plus 0.5em minus
  0.4em\relax Springer Science \& Business Media, 2013, vol.~34.

\bibitem[Norris(1998)]{norris}
J.~Norris, \emph{Markov chains}.\hskip 1em plus 0.5em minus 0.4em\relax
  Cambridge University Press, 1998, no.~2.

\bibitem[Shiryaev(1996)]{shiry}
A.~Shiryaev, \emph{Probability}.\hskip 1em plus 0.5em minus 0.4em\relax
  Springer, 1996, vol.~95.

\bibitem[Solomon(1975)]{solomon}
F.~Solomon, ``Random walks in a random environment,'' \emph{Annals of
  Probability}, vol.~3, no.~1, pp. 1--31, 1975.

\end{thebibliography}

\end{document}